\theoremstyle{plain}
\newtheorem{lemma}{Lemma}
\newtheorem{theorem}{Theorem}
\theoremstyle{remark}
\newtheorem{remark}{Remark}
\DeclareMathOperator{\md}{\mathsf{D}}
\newcommand{\E}[1]{\mathsf{E}\left[\,#1\,\right]}
\newcommand{\Ex}[1]{\mathsf{E}\Big[\,#1\,\Big]}
\renewcommand{\d}[1]{\ensuremath{\operatorname{d}\!{#1}}}
\newcommand{\R}{\mathbb R}
\newcommand{\ra}{\rangle}
\newcommand{\la}{\langle}
\newcommand{\grad}{\mathop{\mathrm{grad}}\nolimits}
\newcommand{\norm}[1]{\left\lVert#1\right\rVert}
\newcommand{\abs}[1]{\left\lvert#1\right\rvert}
\newcommand*{\set}[1]{\left\{#1\right\}}
\newcommand{\ind}[1]{\mathbf{1}_{#1}}
\newcommand{\cI}{\mathcal I}
\journal{Statistics and Probability Letters}
\begin{document}

\begin{frontmatter}



\title{Malliavin regularity of solutions to mixed stochastic differential equations}

\author[label1]{Georgiy Shevchenko\corref{cor1}}
\ead{zhora@univ.kiev.ua}
\cortext[cor1]{Corresponding author}
\author[label1]{Taras Shalaiko}
\ead{tosha@univ.kiev.ua}
 \address[label1]{Department of Probability, Statistics and Actuarial Mathematics, Mechanics and Mathematics
Faculty, Taras Shevchenko National University of Kyiv, 64 Volodymyrska,  01601 Kyiv, Ukraine}


\begin{abstract}
For a mixed stochastic differential equation  driven by independent fractional Brownian motions and Wiener processes, the existence and integrability of the Malliavin derivative of the solution are established. It is also proved that the solution possesses exponential moments.
\end{abstract}

\begin{keyword}
Mixed stochastic differential equation\sep fractional Brownian motion\sep Wiener process\sep Malliavin regularity

\MSC[2010] 60H10 \sep 60H07 \sep  60G22

\end{keyword}
\end{frontmatter}


\section{Introduction}

This paper is devoted to the following mixed stochastic differential equation (SDE) in $\R^d$:
\begin{equation}\label{eq1}
X_t=X_0  +\int_0^ ta(X_s)\d s+ \int_0^t b(X_s)\d W_s+\int_0^t c(X_s)\d B_s,
\end{equation}
where $W=\set{W_t,t\ge 0}$ is an $m$-dimensional standard Wiener process, $B= \set{B_t, t\ge 0}$ is an $l$-dimen\-sional fractional Brownian motion; the coefficients $a:\R^d\to \R^d$, $b\colon \R^d \to \R^{d\times m}$, $c\colon \R^d \to \R^{d\times l}$ are continuous,  $X_0\in\R^d$ is non-random. (See Section~\ref{sec:prelim} for precise definitions of all objects.)

Since the seminal paper \cite{cheridito}, mixed stochastic models containing both a standard Wiener process and a fractional Brownian motion (fBm) gained a lot of attention. The main reason for this is that they allow to model systems driven by a combination of random noises, one of which is white and another has a long memory. In financial modelling, for example, one can distinguish between the randomness coming from economical situation and the randomness originating from  the market microstructure.

Unique solvability of equation \eqref{eq1} was established under different sets of conditions in \cite{GuerraNualart,Kubilius,LRDShMish,SDEMiSH}.
Article \cite{LRDShMish} also contains an important result that the solution to \eqref{eq1} is a limit of solutions to It\^o SDEs, which gives a tool to transfer some elements of the well developed theory for the It\^o SDEs to equation \eqref{eq1}.

Over the last decades, the Malliavin calculus of variations has become one of the most important tools in stochastic analysis. While originally it was developed by Malliavin to study existence and regularity of densities of solutions to SDEs, now it has numerous applications in mathematical finance, statistics, optimal control, etc. For this reason, questions of Malliavin regularity generate considerable scientific interest. There is a huge amount of articles devoted to the Malliavin calculus for It\^o SDEs, see \cite{Nual} and references therein. For SDEs driven by fBm, the questions of Malliavin regularity were studied in \cite{Baudoin,RoughDensity,MishShevTSP,Nourdin-Simon,NualartSau}. Equation \eqref{eq1} can be treated with the help of the rough path theory, which also allows to address the case $H<1/2$, see \cite{RoughDensity}. However, this requires rather high regularity of coefficients (usually they are assumed to be infinitely differentiable with all bounded derivatives). In this paper we will use another techniques, namely, the approach developed in \cite{SDEMiSH,LRDShMish,GMMixed}, to study the equation \eqref{eq1}, which enables us to prove the Malliavin regularity under less restrictive assumptions on the coefficients. One of the key ingredients of the proof is the above-mentioned approximation by the solutions of It\^o SDEs. As a side result, we prove an exponential integrability of solutions to mixed SDEs with bounded coefficients. This result is of independent interest in financial mathematics, where it can be used to prove existence of martingale measures or to argue the integrability of solutions for certain equations with stochastic volatility.

The paper is organized as follows.  Section~\ref{sec:prelim} contains necessary definitions. It also provides a brief summary on the pathwise  integration and the Malliavin calculus of variations for fractional Brownian motion. In Section~\ref{sec:integr}, we prove the main results of the article: exponential integrability of solution to \eqref{eq1} and existence and integrability of Malliavin derivatives. Proofs of auxiliary results are given in Appendix.

\section{Preliminaries}\label{sec:prelim}

\subsection{Definitions, notations and assumptions}
On a complete filtered probability space $\left(\Omega, \mathcal F, \mathbb F = (\mathcal F_t)_{t\ge 0}, \mathsf P\right)$,  let $W = \set{W_t = \left(W_t^1,\dots, W_t^m\right),t\ge 0}$ be a standard $\mathbb F$\nobreakdash-Wiener process in $\R^m$, $B = \set{B_t = \big(B_t^{H,1},\dots, B_t^{H,l}\big),t\ge 0}$ be an $\mathbb F$\nobreakdash-adapted fractional Brownian motion in $\R^l$, i.e.\ a collection of independent fBms $B^{H,k} $ with Hurst index $H\in(1/2,1)$,  independent of $W$. We recall that an fBm with Hurst index $H\in(0,1)$ is a centered Gaussian process $B^H=\{B^H_t,t\ge 0\}$ with the covariance function
$$R_H(t,s)=\frac{1}{2}(t^{2H}+s^{2H}-|t-s|^{2H});$$
in the case $H\in(1/2,1)$ considered here, $B^H$ has the property of long-range dependence. It is known that fBm has a continuous modification (even H\"older continuous of any order up to $H$), and in what follows we will assume that the process $B^H$ is continuous.

The equation \eqref{eq1}, is understood precisely as a system of equations on $[0,T]$
\begin{equation}\label{eq2}
\begin{aligned}
X^i_t=X^i_0  +\int_0^ ta_i(X_s)\d s+\sum_{j=1}^m\int_0^tb_{i,j}(X_s)\d W^j_s +\sum_{k=1}^l\int_0^tc_{i,k}(X_s)\d B^{H,k}_s,i=1,2,\dots, d,
\end{aligned}
\end{equation}
where for $i=1,\dots,d$, $j=1,\dots,m$, $k=1,\dots,l$, the functions $a_i,b_{i,j},c_{i,k}\colon \R^d\to \R$ are continuous; the integrals w.r.t.\ $W^j$ are understood in the It\^o sense, whereas those w.r.t.\ $B^{H,k}$, in the pathwise sense, as defined in \ref{subsec:ifbm}.

Throughout the paper, we will use the following notation: $\abs{\cdot}$ will denote the absolute value of a number, the Euclidean norm of a vector, and the operator norm: $\abs{Ax} = \sup_{\abs{x}=1} \abs{Ax}$. The inner product in $\R^d$ will be denoted by $\la \cdot, \cdot\ra$; $\norm{f}_\infty = \sup_{x\in\R^d} \abs{f(x)}$ is the supremum norm of a function $f$, either real-valued, vector-valued, or operator-valued. The symbol $C$ will be used for a generic constant, whose value is not important and may change from one line to another.


We will impose the following assumptions on the coefficients  $a=(a_1,\ldots,a_d)$, $b=(b_{i,j},i=1,\ldots,d,j=1,\ldots,m)$ and $c=(c_{i,j},i=1,\ldots,d,j=1,\ldots,l)$  of (\ref{eq2}):
\begin{enumerate}[({A}1)]
\item $a,b,c$ are bounded and have bounded continuous derivatives,
\item $c$ is twice differentiable and $c''$ is bounded.
\end{enumerate}

\subsection{Integration with respect to fractional Brownian motion}\label{subsec:ifbm}

The integral with respect to fBm will be understood in the pathwise (Young) sense. Specifically, for a $\nu$-H\"older continuous function $f$ and a $\mu$-H\"older continuous function $g$ with $\mu+\nu >1 $ the integral $\int_a^b f(x) dg(x)$ exists as a limit of integral sums, moreover, the inequality
\begin{gather}
\left|\int_{a}^b f(s)\d g(s)\right|\leq K_{\mu,\nu} \norm{g}_{a,b,\mu}\left(\norm{f}_{a,b,\infty}(b-a)^{\mu}+\norm{f}_{a,b,\nu}(b-a)^{\mu+\nu}\right)
\label{Estim}
\end{gather}
holds,
where $\norm{f}_{a,b,\infty}=\sup_{x\in [a,b]}|f(x)|$ is the supremum norm on $[a,b]$, and for $\gamma\in(0,1)$ $$\norm{f}_{a,b,\gamma}=\sup_{a\leq s<t\leq b}\frac{|f(t)-f(s)|}{|t-s|^{\gamma}}$$
is the H\"older seminorm on $[a,b]$; $K_{\mu,\nu}$ is a universal constant. Thus, since fBm is H\"older continuous of any order less than $H$, the integral $\int_a^b f(s) dB_s^H$ is well defined provided $f$ is $\beta$-H\"older continuous on $[a,b]$ with $\beta>1-H$.
We will use inequality \eqref{Estim} also in a multidimensional case (see e.g. \cite[Proposition 1]{NualartSau}); for simplicity we will write it with the same constant $K_{\mu,\nu}$.


\subsection{Malliavin calculus of variations}
Here we give only basics of Malliavin calculus of variations with respect to fBm, see \cite{Nual} for a deeper exposition. Let $S[0,T]$ denote the set of step functions of the form $f(t) = \sum_{k=1}^{n} c_k \ind{[a_k,b_k)}(t)$ defined on $[0,T]$. For functions $f,g\in S[0,T]$ define the scalar product   \begin{gather*}
\la f,g\ra_{H}=\int_0^T\int_0^T f(t)g(s)\phi(t,s)\d t\d s,
\end{gather*}
where $\phi(t,s)=H(2H-1)|t-s|^{2H-2}$.
Let $L^2_H[0,T]$ denote the closure of $S[0,T]$ w.r.t.\ this scalar product. It is a separable Hilbert space, which contains not only classical functions, but also some distributions (see \cite{pipiras-taqqu}).
Then the 
product $$\mathfrak H = \left(L^2_H [0,T]\right)^{ l}\times \left(L^2[0,T]\right)^{m}$$ is a separable Hilbert space with the scalar product $$\la f,g\ra_{\mathfrak H} = \sum_{i=1}^l \la f_i,g_i\ra_H + \sum_{i=l+1}^m \la f_i,g_i\ra_{L^2[0,T]}.$$
The map
$$
\cI \colon (\ind{[0,t_1)},\ind{[0,t_2)},\dots,\ind{[0,t_l)},\ind{[0,s_1)},\ind{[0,s_2)},\dots,\ind{[0,s_m)})\mapsto (B^{H,1}_{t_1},B^{H,2}_{t_2},\dots, B^{H,l}_{t_l},W^1_{s_1},W^2_{s_2},\dots, W^m_{s_m})
$$
can be extended by linearity to $S[0,T]^{l+m}$. It appears that for $f,g\in S[0,T]^{l+m}$ \ $$\E{\la \cI(f),\cI(g)\ra} = \la f,g\ra_{\mathfrak H},$$
so $\cI$ can be extended to an isometry between $\mathfrak H$ and a subspace of $L^2(\Omega;\R^{m+l})$.

For a smooth cylindrical variable of the form $\xi = F(\cI(f_1),\dots ,\cI(f_n))$, where $f_i=(f_{i,1},\ldots,f_{i,m+l})\in \mathfrak H$ for $i=1,\ldots,n$ and  $F\colon \R^{n(m+l)} \to \R$ is a continuously differentiable finitely supported function, define the
Malliavin derivative $\mathsf D \xi$ as a random element in $\mathfrak H$ with the $j$-th coordinate equal to $ \sum_{i=1}^{n} \partial_{(i-1)(l+m)+j} F (\cI(f_1),\dots, \cI(f_n)) f_{i,j}$, $j=1,\ldots,l+m$. For $p\ge 1$ denote by $\mathbb D^{1,p}$ the closure of the space of smooth cylindrical random variables with respect to the norm
$$\norm{\xi}_{\mathbb D^{1,p}}=\E{|\xi|^{p}+\norm{\md{\xi}}^p_{\mathfrak H}}^{1/p};$$
$\md$ is closable in this space and its closure will be denoted likewise. Finally, the Malliavin derivative is a (possibly, generalized) function from $[0,T]$ to $\R^{l+m}$, so we can introduce the notation
$$
\md \xi = \set{\md_t\xi = \left(\md^{H,1}_t \xi,\dots, \md^{H,l}_t\xi,\md^{W,1}_t\xi,\dots, \md^{W,m}_t\xi\right),\  t\in[0,T]}.$$

\section{Existence of exponential moments and Malliavian regularity}\label{sec:integr}
In this section we prove that certain  power of the supremum norm of the solution to (\ref{eq1}) possesses exponential moment; then this fact is used  to prove  the Malliavin differentiability of this solution.

\begin{theorem}\label{thm:integr}
The solution $X$ of \eqref{eq2} satisfies $\E{\exp\set{z\norm{X}^\alpha_{0,T,\infty}}}<\infty$ for any $\alpha\in(0,4H/(2H+1))$, $z>0$.
\end{theorem}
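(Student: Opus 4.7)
My plan is to derive a pathwise estimate of the shape
\begin{equation*}
\norm{X}_{0,T,\infty}\le C_0+C_1\norm{B}_{0,T,\nu}^{1/\nu}+C_2\norm{M}_{0,T,\mu}\norm{B}_{0,T,\nu}^{(1-\mu)/\nu},
\end{equation*}
where $M_t=\int_0^tb(X_s)\d W_s$ and the auxiliary H\"older exponents $\mu\in(1-H,1/2)$, $\nu\in(1-\mu,H)$ are chosen so that $\mu+\nu>1$, and then combine it with classical exponential-moment bounds for the two Gaussian-type quantities on the right. Concretely, Fernique's theorem gives $\E{\exp(\lambda\norm{B}_{0,T,\nu}^2)}<\infty$ for small $\lambda>0$ (since $\nu<H$), and a Bernstein inequality together with a Garsia--Rodemich--Rumsey chaining argument, using the boundedness of $b$ from (A1), yields the analogous sub-Gaussian bound $\E{\exp(\lambda\norm{M}_{0,T,\mu}^2)}<\infty$. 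Splitting the mixed term by Young's inequality with conjugate exponents $p,q$, the moment $\E{\exp(z\norm{X}_{0,T,\infty}^\alpha)}$ is finite whenever $\alpha p<2$ and $\alpha q(1-\mu)/\nu<2$, which under $1/p+1/q=1$ is equivalent to $\alpha(\nu+1-\mu)<2\nu$. Letting $\mu\uparrow 1/2$ and $\nu\uparrow H$ recovers the advertised range $\alpha<4H/(2H+1)$.

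To obtain the pathwise estimate I would apply \eqref{Estim} together with (A1) to bound, for any $0\le s<t\le T$,
\begin{equation*}
\norm{X}_{s,t,\mu}\le\norm{a}_\infty(t-s)^{1-\mu}+\norm{M}_{s,t,\mu}+K_{\mu,\nu}\norm{B}_{s,t,\nu}\bigl(\norm{c}_\infty(t-s)^{\nu-\mu}+\norm{c'}_\infty\norm{X}_{s,t,\mu}(t-s)^\nu\bigr).
\end{equation*}
I would then construct a pathwise partition $0=t_0<\dots<t_N=T$ of uniform mesh $h$ small enough that $K_{\mu,\nu}\norm{c'}_\infty\norm{B}_{0,T,\nu}h^\nu\le 1/2$; absorbing the self-term on each subinterval yields $\norm{X}_{t_k,t_{k+1},\mu}\le C(h^{1-\mu}+\norm{M}_{t_k,t_{k+1},\mu}+\norm{B}_{0,T,\nu}h^{\nu-\mu})$. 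Picking $h$ of exact order $\norm{B}_{0,T,\nu}^{-1/\nu}$ (so $N\lesssim\norm{B}_{0,T,\nu}^{1/\nu}$) and combining the telescoping estimate $\abs{X_{t_{k+1}}}\le\abs{X_{t_k}}+\norm{X}_{t_k,t_{k+1},\mu}h^\mu$ with the crude bound $\sum_k\norm{M}_{t_k,t_{k+1},\mu}\le N\norm{M}_{0,T,\mu}$ produces exactly the displayed global estimate after summation.

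The step I expect to be most delicate is the absorption: because the ``correct'' mesh $h$ is itself the random quantity $\norm{B}_{0,T,\nu}^{-1/\nu}$, one has to realise the construction pathwise and check measurability of every object that enters. The remaining ingredients---Fernique for the fBm H\"older norm, the sub-Gaussian tail of $\norm{M}_{0,T,\mu}$ coming from the bounded It\^o integrand via exponential martingale inequalities, and Young's splitting of the mixed term---are classical. The genuinely new input of the argument is therefore only the optimisation over $(\mu,\nu)$ that converts the pathwise bound into the sharp threshold $4H/(2H+1)$.
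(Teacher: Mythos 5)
Your proposal is correct and follows essentially the same route as the paper: a pathwise H\"older estimate obtained by absorbing the self-term on subintervals of random length of order $\norm{B}_{0,T,\nu}^{-1/\nu}$ (the paper's Lemma~\ref{lemmaX}), Fernique's theorem for $\norm{B}_{0,T,\nu}$, a sub-Gaussian bound for the H\"older norm of the It\^o integral (the paper's Lemma~\ref{intlemma}, proved there via Garsia--Rodemich--Rumsey plus a Malliavin-calculus concentration inequality rather than your exponential-martingale/Bernstein argument, but to the same effect), and finally Young's inequality on the cross term with the optimisation $\mu\uparrow 1/2$, $\nu\uparrow H$ yielding the threshold $4H/(2H+1)$. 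The only cosmetic difference is that the paper parametrises the Young splitting by $\beta\in\left((2\nu)^{-1},2\alpha^{-1}-1\right)$ instead of conjugate exponents $p,q$.
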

\begin{proof}
 The inequality $\alpha<4H/(2H+1)$ is equivalent to
$(2H)^{-1}<2\alpha^{-1} -1$, therefore, it is possible to choose some
$\nu\in(1/2,H)$ such that $(2\nu)^{-1}<2\alpha^{-1} -1$. Take also arbitrary $\beta\in \left((2\nu)^{-1},2\alpha^{-1} -1\right)$ so that $\alpha(1+\beta)<2$.

Now if  $\kappa\in(1-\nu,1/2)$  is sufficiently close to $1/2$, it follows from Lemma~\ref{lemmaX} that
$$
\norm{X}_{0,T,\infty}\le C\left(1+ \norm{B}_{0,T,\nu}^{1+\beta} + J_{X,\kappa}(T) \big(1+\norm{B}_{0,T,\nu}^{\beta}\big)\right).
$$
By the Young inequality,  $$
J_{X,\kappa}(T)\norm{B}^\beta_{0,T,\nu} \le \frac{ J_{X,\kappa}(T)^{1+\beta}}{1+\beta} + \frac{\beta
\norm{B}^{1+\beta}_{0,T,\nu}}{1+\beta}.
$$
Since $\norm{B}_{0,T,\nu}$ is an almost surely finite supremum of a centered  Gaussian family, and $\alpha(1+\beta)<2$, for any $y>0$ we have $\E{\exp\set{y\norm{B}_{0,T,\nu}^{\alpha(1+\beta)}}}<\infty$ thanks to Fernique's theorem. Further, it follows from Lemma~\ref{intlemma} that for any $y>0$ \ $\E{\exp\set{yJ_{X,\kappa}(T)^{\alpha(1+\beta)}}}<\infty$ and $\E{\exp\set{yJ_{X,\kappa}(T)^{\alpha}}}<\infty$. Thus, writing
$$
\norm{X}_{0,T,\infty}^\alpha\le C_\alpha \left(1+ \norm{B}_{0,T,\nu}^{\alpha(1+\beta)} + J_{X,\kappa}(T)^\alpha  +J_{X,\kappa}(T)^{\alpha(1+\beta)} \right),
$$
we get the required statement with the help of the  H\"older inequality.
\end{proof}

\begin{theorem}
Let $X$ be the solution of \eqref{eq2}. Then for all $t>0$ \ $X_t\in \bigcap_{p\ge 1}\mathbb D^{1,p}$.
\end{theorem}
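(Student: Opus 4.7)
The plan is the standard approximation-closability scheme. I will approximate $X$ by solutions of It\^o SDEs for which Malliavin differentiability is classical, obtain uniform $L^p$ bounds on their Malliavin derivatives, and pass to the limit using closability of $\md$ on $\mathbb D^{1,p}$.

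Following \cite{LRDShMish}, I would replace $B$ by absolutely continuous approximations $B^{(n)}$ (for instance the Steklov averages $B^{(n)}_t = n\int_{(t-1/n)\vee 0}^{t} B_s\,\d s$) and let $X^{(n)}$ solve the It\^o SDE
$$
X^{(n)}_t = X_0 + \int_0^t a(X^{(n)}_s)\,\d s + \int_0^t b(X^{(n)}_s)\,\d W_s + \int_0^t c(X^{(n)}_s)\dot B^{(n)}_s\,\d s.
$$
By \cite{LRDShMish}, $X^{(n)}\to X$ in the uniform norm on $[0,T]$ in probability; since $\norm{B^{(n)}}_{0,T,\nu}\le C\norm{B}_{0,T,\nu}$ for any $\nu\in(1/2,H)$, the estimates behind Theorem~\ref{thm:integr} apply uniformly in $n$ and upgrade this convergence to $L^p(\Omega)$ for every $p\ge1$.

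For each fixed $n$, the coefficients are $C^1$ with bounded derivatives and $\dot B^{(n)}$ is a smooth functional of $B$ in the Malliavin sense, so the classical theory of It\^o SDEs (see \cite{Nual}) gives $X^{(n)}_t\in\bigcap_{p\ge1}\mathbb D^{1,p}$, with Malliavin derivative satisfying the linear equation
$$
\md_r X^{(n)}_t = \eta^{(n)}_r + \int_r^t a'(X^{(n)}_s)\md_r X^{(n)}_s\,\d s + \int_r^t b'(X^{(n)}_s)\md_r X^{(n)}_s\,\d W_s + \int_r^t c'(X^{(n)}_s)\md_r X^{(n)}_s\,\d B^{(n)}_s
$$
for $r<t$, with initial value $\eta^{(n)}_r$ equal to $b_{\cdot,j}(X^{(n)}_r)$ or $c_{\cdot,k}(X^{(n)}_r)$ according to which $\mathfrak H$-coordinate is considered; the derivative vanishes for $r>t$.

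Estimating the integral against $B^{(n)}$ via \eqref{Estim} (the required H\"older regularity of the integrand being controlled by assumption (A2) combined with the H\"older smoothness of $X^{(n)}$, uniform in $n$) and running a Gronwall-type argument parallel to the proof of Theorem~\ref{thm:integr}, one obtains a pathwise bound of the form
$$
\sup_{r\in[0,t]}\abs{\md_r X^{(n)}_t} \le C\exp\Big(C\big(1+\norm{B^{(n)}}_{0,T,\nu}\big)^{q}\Big)
$$
for some finite $q$. Choosing $\nu$ close enough to $H$ so that $q<2$, Fernique's theorem renders this bound $L^p$-integrable uniformly in $n$, and hence so is the $\mathfrak H$-norm of $\md X^{(n)}_t$. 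Combining this uniform bound with the $L^p$-convergence $X^{(n)}_t\to X_t$ and the closability of $\md$ on $\mathbb D^{1,p}$ yields $X_t\in\mathbb D^{1,p}$ for every $p\ge1$. The principal obstacle lies in the Gronwall step: one must show that the dependence on $\norm{B^{(n)}}_{0,T,\nu}$ appears through a power strictly less than $2$, so that Fernique's theorem applies, and assumption (A2) is essential here, since the Young integral driven by $B^{(n)}$ demands H\"older control of the integrand $c'(X^{(n)})\md_r X^{(n)}$ of order strictly above $1-\nu$, provided precisely by the boundedness of $c''$.
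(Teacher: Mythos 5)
Your overall scheme is the one the paper uses: Steklov-average the fBm, solve the resulting It\^o SDEs $X^{(n)}$, prove uniform $\mathbb D^{1,p}$ bounds, and conclude by closability (the paper invokes \cite[Lemma 1.2.3]{MR2200233}, which needs only convergence in probability plus uniformly bounded moments). However, there is a genuine gap at the decisive step. You claim that a ``Gronwall-type argument parallel to the proof of Theorem~\ref{thm:integr}'' yields a \emph{pathwise} bound
$\sup_{r}\abs{\md_r X^{(n)}_t}\le C\exp\big(C(1+\norm{B^{(n)}}_{0,T,\nu})^{q}\big)$.
This cannot work as stated. The equation for $D^n_\cdot=\md_r X^{(n)}_\cdot$ is linear and contains the It\^o term $\int \la\grad b_{i,j}(X^n_u),D^n_u\ra\,\d W^j_u$, whose integrand involves $D^n$ itself and is therefore \emph{not} bounded a priori. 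In Theorem~\ref{thm:integr} the analogous term $J_{Y,\theta}$ has the bounded integrand $b(Y)$, so Lemma~\ref{intlemma} gives it Gaussian-type tails independently of the solution; that is precisely what makes the pathwise Lemma~\ref{lemmaX} usable there. For the derivative equation this shortcut is unavailable: any pathwise Gronwall leaves you with the H\"older norm of $\int [b'(X^n),D^n]\,\d W$ on the right-hand side, which depends on $D^n$ and cannot be expressed through $\norm{B^{(n)}}_{0,T,\nu}$ alone. The paper resolves this in Lemma~\ref{LemmaR} by a different mechanism: localize on the event $\set{\norm{\gamma}_{0,t,\mu}+J_{Y,\theta}(t)\le N,\ \norm{R}_{s,t,\infty}\le M}$, control the It\^o term in \emph{expectation} via Garsia--Rodemich--Rumsey and moment inequalities, run Gronwall for $\E{\norm{R}^{2p}\I_t}$, obtain a bound $Ke^{KN^\alpha}$ with $\alpha=2/(\mu+\theta)<2$, and then de-localize by Cauchy--Schwarz against $e^{-2K\zeta^\alpha}$, using Fernique and Lemma~\ref{intlemma} to integrate the exponential. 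Your proposal identifies the wrong obstacle (the power of $\norm{B^{(n)}}$ and the role of (A2)); the exponent $\alpha<2$ comes for free from $\mu+\theta>1$, whereas the real difficulty is the unbounded integrand in the Brownian integral of the linearized equation.

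A secondary, more minor inaccuracy: for the derivative in the fBm direction the ``initial value'' is not $c_{\cdot,k}(X^{(n)}_r)$ at time $r$. Since the noise enters through $\dot Z^{n}_u=n(B_u-B_{(u-1/n)\vee 0})$, one gets $\md^{H,q}_s\dot Z^{n,j}_u=n\ind{((u-1/n)\vee 0,\,u]}(s)\delta_{jq}$, so the forcing is smeared: $D^{n,i}_{s,t}=n\int_s^{(s+1/n)\wedge t}c_{i,q}(X^n_u)\,\d u$. The paper handles this by superposition, writing $D^n_t=n\int_s^{(s+1/n)\wedge t}R^n_t(z)\,\d z$ and applying Lemma~\ref{LemmaR} to each $R^n(z)$; you would need the same device (or an argument that the smearing is harmless) to get bounds uniform in $s$ and $n$.
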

\begin{proof}
Consider the sequence $\set{Z^n_t=n\int_{(t-1/n)\vee 0}^{t}B_s\d s,\, n\geq 1}$
of processes approximating $B$. It can be easily checked (see e.g. \cite{LRDShMish})  that $\norm{Z^n-B}_{0,T,\mu}\to 0$, $n\to \infty$ a.s. 
Processes $Z^n$ are absolutely continuous: $Z_t^n = \int_0^t \dot Z_s^n \d s$ with $\dot Z_t^n = n \big(B_{t} - B_{(t-1/n)\vee 0}\big)$.

Now define ${X}^n=\{X^n_t,t\in[0,T]\}=\{(X^{n,1}_t,\ldots,X^{n,d}_t),t\in [0,T]\}_{n\geq 1}$ as the solution to the SDE
 \begin{gather}
{X^n_t}=X_0 + \int_0^t \left(a(X^n_u)+c(X^n_u)\dot{Z^n_u}\right)\d u+\int_0^t b(X^n_u)\d W_u;
\label{eqXn}
\end{gather}
coordinatewise, for $i={1,\dots, d}$
\begin{gather*}
X^{n,i}_t=X_{0,i} + \int_0^t \Big(a_i(X^n_u)+\sum_{j=1}^l c_{i,j}(X^n_u)\dot Z^{n,j}_u\Big)\d u +\sum_{k=1}^m \int_0^t b_{i,k}(X^n_u)\d W^k_u,
\end{gather*}
where $\dot Z_t^{n,j} = n \big(B^{H,j}_{t} - B^{H,j}_{(t-1/n)\vee 0}\big)$.
By \cite[Theorem 4.1]{LRDShMish}, $X^n_t \to X_t$, $n\to \infty$, uniformly on $[0,T]$ in probability.

First note that all moments of $X^n$ are bounded uniformly in $n$. Indeed, from the almost sure convergence $\norm{Z^n-B}_{0,T,\mu}\to 0$, $n\to\infty$,  we have
$\zeta = \sup_{n\ge 1} \norm{Z^n}_{0,T,\mu}<\infty$ a.s. But $\zeta$ is a supremum of some centered Gaussian family
, so  by Fernique's theorem, $\E{\exp\set{{z \zeta^{a}}}}<\infty$ for all $z>0$, $a\in(0,2)$. A fortiori, $\sup_{n\ge 1}\E{\exp\set{{z \norm{Z^n}_{0,T,\mu}^{a}}}}<\infty$. Arguing as in the proof of Theorem~\ref{thm:integr}, we get $\sup_{n\ge 1}\E{\exp\set{z\norm{X^n}^\alpha_{0,T,\infty}}}<\infty$ for any $\alpha\in(0,4H/(2H+1))$, $z>0$. The uniform boundedness of moments clearly follows.

Further, the solution of (\ref{eqXn}) is Malliavin differentiable w.r.t.\ $W$ (see e.g. \cite{Nual}) as a solution to an It\^o SDE.  It is Malliavin differentiable w.r.t.\ $B$ as a solution of an It\^o SDE with a parameter.
To see this, take a direction $h\in \left(L^2_H [0,T]\right)^{ l}$ and consider a version of the equation \eqref{eqXn} with $B^H$ shifted by $\varepsilon h$, $\varepsilon\in \R$:
\begin{gather*}
X^{(n)}_t(\varepsilon,h)=x_0+\int_0^t n c(X^n_s(\varepsilon,h))\frac{\d{}}{\d{s}}\left(\int_{s-1/n}^s\left( B^H_u+\varepsilon\int_0^u \phi(u,v) h(v)\d{v}\right) \d{u}\right)\d{s}\\{}+
\int_0^t a(X^{(n)}_s(\varepsilon,h))\d{s}+\int_0^t b(X^{(n)}_s(\varepsilon,h))\d{W_s}.
\label{Expl}
\end{gather*}
This is a usual It\^o SDE with a scalar parameter $\varepsilon$, so its solution is differentiable with respect to $\varepsilon$; moreover, the derivative  $\frac{\d{}}{\d{\varepsilon}}X^{(n)}_t(\varepsilon,h)\bigr |_{\varepsilon=0}$ satisfies a linear SDE obtained by differentiating formally both sides of \eqref{Expl} (see e.g. \cite[Part II, \textsection 8]{GiSko}).  This means that $X^{(n)}_t$ is differentiable in all directions from $ \left(L^2_H [0,T]\right)^{l}$, as claimed.

The equations satisfied by the derivatives w.r.t.\ $B$ and $W$ are similar, so we will study those for $B$, as they are slightly more involved.

Fix $s\in [0,T]$ and $q=1,\dots,l$. Define $D^{n,i}_t= 
\md^{H,q}_s X^{i,n}_t$,  $i=1,\dots,d$.  Then $D^{n}$ satisfies
\begin{equation*} 
\begin{gathered}
D^{n,i}_t=D^{n,i}_{s,t}+\int_s^t\la \grad a_i(X^n_u),D^{n}_u\ra \d u +\sum_{j=1}^m \int_s^t \la \grad b_{i,j}(X^n_u), D^n_u\ra \d W^j_u 
\\+
\sum_{j=1}^l \int_s^t \la \grad c_{i,j}(X^n_u),D^n_u\ra \d Z^{j,n}_u,
\end{gathered}
\end{equation*}
where
$$D^{n,i}_{s,t}=\sum_{j=1}^l \int_0^t c_{i,j}(X^n_u) \md^{H,q}_s \dot Z^{n,j}_u \d u= n\int_{s}^{(s+1/n)\wedge t} c_{i,q}(X^n_u) \d u.$$
Due to linearity, the solution to equation~\ref{eqR} can be written as
$$
D_t^n = n\int_s^{(s+1/n)\wedge t} R^n_t(z) dz,
$$
where for $z\in (s,s+1/n)$ the process $\set{R^n_t(z) = \big(R^{n,1}_t(z),\dots,R^{n,d}_t(z)\big),t\ge z }$ solves
\begin{equation*} 
\begin{gathered}
R^{n,i}_t(z)= c_{i,q}(X_z)+\int_z^t\la \grad a_i(X^n_u),R^{n}_u(z)\ra \d u +\sum_{j=1}^m \int_z^t \la \grad b_{i,j}(X^n_u), R^{n}_u(z) \ra \d W^j_u 
\\+
\sum_{j=1}^l \int_z^t \la \grad c_{i,j}(X^n_u),R^{n}_u(z) \ra \d Z^{j,n}_u,
\end{gathered}
\end{equation*}

Therefore,
$$
\norm{D^n}_{s,T,\infty} \le n\int_s^{(s+1/n)} \norm{R^n(z)}_{z,T,\infty} dz,
$$
whence for any $p\ge 1$ by Jensen's inequality,
$$
\E{\norm{D^n}_{s,T,\infty}^p} \le n\int_s^{(s+1/n)} \E{\norm{R^n(z)}^p_{z,T,\infty}} dz.
$$
From Lemma~\ref{LemmaR} we have
$$
\E{\norm{R^n(z)}^p_{z,T,\infty}} \le K_p\left(\E{\exp\set{{K_p \norm{Z^n}_{0,T,\mu}^{\alpha}}}}\right)^{1/4}.
$$
As it was shown above, $\sup_{n\ge 1} \E{\exp\set{{K_p \norm{Z^n}_{0,T,\mu}^{\alpha}}}}<\infty$, thus, we obtain that $\E{\norm{D^n}_{s,T,\infty}^p}$ is bounded by a constant independent of $n$ and of $s$.
So we have for any $p\ge 1$,
$$
\sup_{n\ge 1}\sup_{s,t\in[0,T]}\E{\abs{\md^{H,q}_s X^n_t}^p}<\infty,\ q=1,\dots,l.
$$
Similarly, $$
\sup_{n\ge 1}\sup_{s,t\in[0,T]}\E{\abs{\md^{W,j}_s X^n_t}^p}<\infty,\ j=1,\dots,m. $$
Hence it is easy to deduce that
$\sup_{n\ge 1}\E{\norm{\md X^n_t}^p_{\mathfrak H}}<\infty$; and, taking into account that all moments of $X^n$ are bounded uniformly in $n$, we get $\sup_{n\ge 1}\E{\norm{X^n_t}^p_{\mathbb D^{1,p}}}<\infty$ for any $p\ge 1$. From here the Malliavin differentiability of $X$ follows from \cite[Lemma 1.2.3]{MR2200233} and uniform boundedness of moments $X^n$. 
\end{proof}
\begin{remark}
Using the same techniques, it is possible to generalize the results of the paper to the case where the driving fBm's have different Hurst exponents.
\end{remark}

\appendix
\section{Technical lemmas}\label{sec:bounds}
\begin{lemma}
\label{intlemma}
Let $A>0$, $\kappa\in(0,1/2)$, $\alpha\in(0,2)$, $z>0$, $t>0$.  There exists a constant $K_{A,\kappa,\alpha,z,t}$ such that if an $\mathbb F$-adapted process $\set{\xi_s,s\in [0,t]}$ satisfies $\abs{\xi_s}\le A$ for almost all $s\in[0,t]$ and $\omega\in \Omega$, and if $V$ is a scalar $\mathbb F$-Wiener process,  then  $\E{\exp\set{z\norm{\int_0^\cdot \xi_s dV_s}_{0,t,\kappa}^\alpha}}\le K_{A,\kappa,\alpha,z,t}.$
\end{lemma}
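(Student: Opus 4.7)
Write $M_r := \int_0^r \xi_s \d V_s$ for $r \in [0,t]$, which is a continuous $\mathbb F$-martingale whose quadratic variation satisfies $\la M\ra_r = \int_0^r \xi_s^2\,\d s \le A^2 r$. The plan is to show that the H\"older seminorm $\norm{M}_{0,t,\kappa}$ has sub-Gaussian tails; the conclusion then follows easily because $\alpha<2$.

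First, by the Burkholder--Davis--Gundy inequality with its standard sharp $p$-dependence of constants, for every $p\ge 2$ and $0\le s\le r\le t$,
$$
\E{\abs{M_r - M_s}^p} \le (c_0 p)^{p/2}\,\E{\left(\int_s^r \xi_u^2\,\d u\right)^{p/2}} \le (c_0 p)^{p/2}\, A^p (r-s)^{p/2},
$$
where $c_0$ is a universal constant. Fix $p$ so large that $p(1/2-\kappa)>1$ and apply the Garsia--Rodemich--Rumsey inequality with $\Psi(x)=x^p$ and $\varphi(u)=u^{\kappa+2/p}$; combined with the previous bound and an integration of $\E{\abs{M_r - M_s}^p}/\abs{r-s}^{p\kappa+2}$ over $[0,t]^2$, this yields
$$
\E{\norm{M}_{0,t,\kappa}^p} \le C_\kappa^p\,(c_0 p)^{p/2} A^p\, t^{p(1/2-\kappa)}.
$$
In particular $\bigl(\E{\norm{M}_{0,t,\kappa}^p}\bigr)^{1/p} \le C_\kappa A \sqrt{c_0 p}\, t^{1/2-\kappa}$, and optimizing over $p$ in Markov's inequality produces the sub-Gaussian tail
$$
\mathsf P\bigl(\norm{M}_{0,t,\kappa}>x\bigr) \le C_1 \exp\!\left(-\frac{C_2\, x^2}{A^2\, t^{1-2\kappa}}\right), \qquad x>0.
$$

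Finally, integration by parts yields
$$
\E{\exp\set{z\norm{M}_{0,t,\kappa}^\alpha}} = 1 + z\alpha \int_0^\infty x^{\alpha-1} e^{zx^\alpha}\, \mathsf P\bigl(\norm{M}_{0,t,\kappa}>x\bigr)\d x,
$$
and the sub-Gaussian bound dominates the integrand by $x^{\alpha-1} \exp(zx^\alpha - C_2' x^2)$, which is integrable precisely because $\alpha<2$. I expect the main obstacle to be the careful bookkeeping of the $p$-dependence of constants in BDG and GRR so as to secure the $\sqrt{p}$-growth of $L^p$-norms of $\norm{M}_{0,t,\kappa}$; this is exactly what produces a sub-Gaussian rather than merely sub-exponential tail and thus permits $\alpha$ up to the critical value $2$.
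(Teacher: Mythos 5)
Your argument is correct, but it reaches the sub-Gaussian tail for $\norm{M}_{0,t,\kappa}$ by a genuinely different route than the paper. The paper applies the Garsia--Rodemich--Rumsey inequality only once, with a single fixed $p>(1/2-\kappa)^{-1}$, to bound the \emph{mean} $m=\E{\norm{M}_{0,t,\kappa}}$; the Gaussian concentration around $m$ is then imported from a Malliavin-calculus result (Theorem 3.6 of Viens and Vizcarra), applied to the family $Z_{u,s}=(M_s-M_u)(s-u)^{-\kappa}$ after checking that $\int_0^t\abs{\md^V_r Z_{u,s}}^2\d r\le A^2t^{1-2\kappa}$ almost surely. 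You instead run the classical moment method: the deterministic bound $\la M\ra_r-\la M\ra_s\le A^2(r-s)$ gives sub-Gaussian increments, hence $\E{\abs{M_r-M_s}^p}\le (c_0p)^{p/2}A^p(r-s)^{p/2}$ (via sharp BDG constants, or even more directly via the exponential martingale inequality after a time change); tracking the $\sqrt{p}$ growth through GRR and optimizing $p$ in Chebyshev's inequality then yields the same Gaussian tail. Both proofs ultimately rest on the same two inputs (GRR plus the a.s.\ bound on the bracket), and both give the critical range $\alpha<2$. Your version is more elementary and self-contained in that it uses only adaptedness and boundedness of $\xi$; in particular it avoids differentiating the stochastic integral in the Malliavin sense, which in the paper's computation of $\md^V_r Z_{u,s}$ silently drops the contribution of $\md^V\xi$ and thus implicitly requires more of $\xi$ than the statement assumes. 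The price is the bookkeeping of $p$-dependent constants you anticipate, and a slightly less explicit constant in the exponent than the paper's $2A^2t^{1-2\kappa}$; neither affects the conclusion.
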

\begin{proof}
Define $Z_v=\int_0^v \xi_r\,d V_r$, $0\leq v\le t$ and put $Z_{u,s} =(Z_s-Z_u) (s-u)^{-\kappa},\ 0\le u<s\le t$. Using the Garsia--Rodemich--Rumsey inequality, we can write for $p>(1/2-\kappa)^{-1}$
\begin{gather*}
m:= \E{\norm{Z}_{0,t,\kappa}}\le C_{p,\kappa,t}
\E{\left(\int_0^t \int_0^t \frac{\abs{\int_u^s \xi_r dV_r}^{p}}{\abs{s-u}^{p\kappa + 2}}\d u\d s\right)^{1/p}}\le C_{p,\kappa,t}
\left(\int_0^t \int_0^t \frac{\E{\abs{\int_u^s \xi_r dV_r}^{p}}}{\abs{s-u}^{p\kappa + 2}}\d u\d s\right)^{1/p}\\
\le C_{p,\kappa,t} \left(\int_0^t \int_0^t \frac{ \Ex{\abs{\int_u^s \abs{\xi_r}^2 dr}^{p/2}}}{\abs{s-u}^{p\kappa + 2}}\d u\d s\right)^{1/p}\le
C_{p,\kappa,t} A \left(\int_0^t \int_0^t \abs{s-u}^{p(1/2-\kappa) -2 }\d u\d s\right)^{1/p}\le C_{p,\kappa,t,A}.
\end{gather*}
Further, let $\md^V$ denote the Malliavin derivative with respect to $V$, and $Z_{u,s} = \int_u^s \xi_r dV_r (s-u)^{-\kappa}$, $(u,s)\in \mathbf{T}:= \set{(a,b)\mid 0\le a<b\le t}$.
Then
$$
\int_{0}^{t}\abs{\md^V_r Z_{u,s}}^2 \d r = (u-s)^{-2\kappa}\int_{u}^{s}\abs{\xi_r}^2 \d r \le A^2 (u-s)^{1-2\kappa} \le A^2 t^{1-2\kappa}
$$
almost surely. Therefore, it follows from \cite[Theorem 3.6]{viens-vizcarra} that for any $x>0$
$$
\Pr\left(\norm{\int_0^\cdot \xi_s dV_s}_{0,t,\kappa}>m + x\right) = \Pr\left(\sup_{(u,s)\in\mathbf T} Z_{u,s}>m + x\right)\le 4\exp\set{-\frac{x^2}{2A^2 t^{1-2\kappa}}},
$$
which provides the required statement.
\end{proof}

Further we estimate the solution of a slightly more general version of equation \eqref{eq2}:
\begin{gather}
Y^i_t=Y^i_0+\int_0^ ta_i(Y_s)\d s+\sum_{j=1}^m\int_0^tb_{i,j}(Y_s)\d W^j_s+\sum_{k=1}^l\int_0^tc_{i,k}(Y_s)\d \gamma^k_s, i=1,\dots,d,
\label{SDEgamma}
\end{gather}
where $\gamma=\set{(\gamma^1_t,\ldots,\gamma^l_t),t\ge 0}$ is a process in $\R^d$ with $\mu$-H\"older continuous paths, $\mu>1/2$; the integral $\int_0^t c_{i,k}(X_s)\d \gamma^k_s$ is understood in the  Young sense.

Fix some $\theta\in(1-\mu,1/2)$ and define
\begin{gather*}
J_{Y,\theta}(t)= 
\sum_{i=1}^d\sum_{j=1}^m 
\norm{\int_0^\cdot b_{i,j}(Y_s)\d W^j_s}_{0,t,\theta} 
.
\end{gather*}

The following result establishes pathwise estimates of the solution to \eqref{SDEgamma}, which are better  than those in \cite[Lemma 4.1]{GMMixed}, but require stronger assumptions. To prove it, we modify the approach of  \cite{HuNualart}.
\begin{lemma}\label{lemmaX}
The solution $Y$ of \eqref{SDEgamma} satisfies
\begin{equation*}\label{Yestim}
\norm{Y}_{0,t,\infty}\leq \abs{Y_0}+2\left(\norm{a}_\infty+J_{Y,\theta}(t)+K_{\theta,\mu}\norm{\gamma}_{0,t,\mu}\norm{c}_\infty\right)\left(t^{\theta}+
t\left(2K_{\theta,\mu}\norm{\gamma}_{0,t,\mu}\norm{c'}_\infty+1\right)^{(1-\theta)/\mu}\right).
\end{equation*}
\end{lemma}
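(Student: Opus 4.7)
The plan is to first establish a local $\theta$-H\"older bound on $Y$ over a short subinterval $[u,v]\subseteq[0,t]$ via the Young estimate \eqref{Estim}, then pick the subinterval length small enough that the implicit dependence on $\norm{Y}_{u,v,\theta}$ itself can be absorbed into the left-hand side, and finally glue the local bounds across a partition of $[0,t]$. This is the strategy of Hu and Nualart, adapted to the mixed setting.

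For the local bound I would split the right-hand side of \eqref{SDEgamma} into its three integrals. The drift contributes at most $\norm{a}_\infty(v-u)^{1-\theta}$ to $\norm{Y}_{u,v,\theta}$; the It\^o term is dominated by $J_{Y,\theta}(t)$, since restriction to a subinterval can only decrease a seminorm. For the Young integral, bound $\norm{c(Y)}_{u,v,\infty}\le\norm{c}_\infty$ and, by the mean value theorem, $\norm{c(Y)}_{u,v,\theta}\le\norm{c'}_\infty\norm{Y}_{u,v,\theta}$; applied inside \eqref{Estim} and divided by $(r-s)^\theta$, this gives
\[
\norm{Y}_{u,v,\theta}\le \norm{a}_\infty(v-u)^{1-\theta}+J_{Y,\theta}(t)+K_{\theta,\mu}\norm{\gamma}_{0,t,\mu}\bigl(\norm{c}_\infty(v-u)^{\mu-\theta}+\norm{c'}_\infty\norm{Y}_{u,v,\theta}(v-u)^\mu\bigr).
\]
Setting $\Delta=(2K_{\theta,\mu}\norm{\gamma}_{0,t,\mu}\norm{c'}_\infty+1)^{-1/\mu}$, one has $\Delta\le 1$ and $K_{\theta,\mu}\norm{\gamma}_{0,t,\mu}\norm{c'}_\infty\Delta^\mu\le 1/2$, so whenever $(v-u)\le\Delta$ the nonlinear term is absorbed and
\[
\norm{Y}_{u,v,\theta}\le 2\bigl(\norm{a}_\infty(v-u)^{1-\theta}+J_{Y,\theta}(t)+K_{\theta,\mu}\norm{\gamma}_{0,t,\mu}\norm{c}_\infty(v-u)^{\mu-\theta}\bigr).
\]

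For the gluing step I would take a uniform grid $0=t_0<t_1<\dots<t_N=t$ with $N=\lceil t/\Delta\rceil\le t/\Delta+1$ and every $t_i-t_{i-1}\le\Delta$. Since $|Y_s-Y_{t_{i-1}}|\le(s-t_{i-1})^\theta\norm{Y}_{t_{i-1},t_i,\theta}$ for $s\in[t_{i-1},t_i]$, the triangle inequality yields
\[
\norm{Y}_{0,t,\infty}\le|Y_0|+2\sum_{i=1}^N\bigl(\norm{a}_\infty(t_i-t_{i-1})+J_{Y,\theta}(t)(t_i-t_{i-1})^\theta+K_{\theta,\mu}\norm{\gamma}_{0,t,\mu}\norm{c}_\infty(t_i-t_{i-1})^{\mu}\bigr).
\]
By concavity of $x\mapsto x^\alpha$ with $\alpha\in(0,1)$ and subadditivity of $(\cdot)^{1-\alpha}$, one has $\sum_i(t_i-t_{i-1})^\alpha\le N^{1-\alpha}t^\alpha\le t\Delta^{\alpha-1}+t^\alpha$. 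Applying this with $\alpha=\theta$ and $\alpha=\mu$, and using $\Delta\le 1$ together with $\mu>\theta$ to dominate both $t\Delta^{\mu-1}+t^\mu$ and $t$ by $t\Delta^{\theta-1}+t^\theta$, the three summands collapse into the single factor
\[
t^\theta+t\Delta^{\theta-1}=t^\theta+t\bigl(2K_{\theta,\mu}\norm{\gamma}_{0,t,\mu}\norm{c'}_\infty+1\bigr)^{(1-\theta)/\mu},
\]
matching the statement.

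The main technical point is the calibration of $\Delta$: the precise form with the innocuous $+1$ is chosen so that $\Delta$ is small enough for absorption yet well-defined regardless of whether $\norm{c'}_\infty$ or $\norm{\gamma}_{0,t,\mu}$ vanishes, and so that its $(\theta-1)$-power reproduces exactly the exponent $(1-\theta)/\mu$ in the claim. A secondary subtlety is consolidating the three distinct natural exponents ($1$, $\theta$, $\mu$) produced by the three integrals into the single factor above; this relies crucially on $\Delta\le 1$ and $\theta<\mu$.
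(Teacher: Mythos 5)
Your proof is correct and follows essentially the same route as the paper's: the same Young-estimate local bound, the same choice of $\Delta$ to absorb the $\norm{Y}_{u,v,\theta}$ term, and the same partition-and-telescope gluing over roughly $t/\Delta$ subintervals. The only cosmetic difference is that you carry the exponents $1-\theta$ and $\mu-\theta$ through to the gluing step and consolidate them there, whereas the paper bounds them by $1$ immediately (by restricting to $s-u\le 1$); both yield the stated estimate.
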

\begin{proof}To simplify the text, we will use the notation of equation \eqref{eq1} for the integrals in \eqref{SDEgamma}.
Applying (\ref{Estim}), we can write for $u,s\in[0,t]$ such that $s\in(u,u+1]$
\begin{align*}
\abs{Y_s-Y_u}
&\le \abs{\int_u^s a(Y_v)\d v} + \abs{\int_{u}^{s} b(Y_v)\d W_v} + \abs{\int_{u}^{s} c(Y_v)\d \gamma_v} \\
&\leq \norm{a}_\infty (s-u)+J_{Y,\theta}(s-u)^\theta +K_{\theta,\mu}\norm{\gamma}_{0,t,\mu}\Bigl(\norm{c}_\infty(s-u)^{\mu}+\norm{c(Y)}_{u,s,\theta}(s-u)^{\theta+\mu} \Bigr)\\&\leq
\big(\norm{a}_\infty+J_{Y,\theta}(t)\big)(s-u)^\theta+
K_{\theta,\mu}\norm{\gamma}_{0,t,\mu}\left(\norm{c}_\infty(s-u)^{\mu}+
 \norm{c'}_\infty\norm{Y}_{u,s,\theta} (s-u)^{\theta+\mu}\right).
\end{align*}
It follows that
\begin{gather*}
\norm{Y}_{u,s,\theta}\leq \norm{a}_\infty+J_{Y,\theta}(t)+K_{\theta,\mu}\norm{\gamma}_{0,t,\mu}\norm{c}_\infty+K_{\theta,\mu}\norm{\gamma}_{0,t,\mu}\norm{c'}_\infty\norm{Y}_{u,s,\theta}(s-u)^{\mu}.
\end{gather*}
Now put $\Delta= \left(2K_{\theta,\mu}\norm{\gamma}_{0,t,\mu}\norm{c'}_\infty+1\right)^{-1/\mu}$. With this choice, for $(s-u)\le \Delta$
\begin{equation}\label{Ysttheta}
\norm{Y}_{u,s,\theta}\leq 2\left( \norm{a}_\infty + J_{Y,\theta}(t) +K_{\theta,\mu}\norm{\gamma}_{0,t,\mu}\norm{c}_\infty\right).
\end{equation}
Therefore, for $(s-u)\le \Delta$
\begin{gather*}
\norm{Y}_{u,s,\infty}\leq \abs{Y_u} +\norm{Y}_{u,s,\theta}(s-u)^\theta \le  \abs{Y_u}+2\left(\norm{a}_\infty+J_{Y,\theta}+K_{\theta,\mu}\norm{\gamma}_{0,T,\mu}\norm{c}_\infty\right)(s-u)^{\theta}.
\end{gather*}
If $\Delta\ge t$, we set $u=0$, $s=t$ and obtain
\begin{gather*}
\norm{Y}_{0,t,\infty} \leq \abs{Y_0}+2\left(\norm{a}_\infty+J_{Y,\theta}(t)+K_{\theta,\mu}\norm{\gamma}_{0,t,\mu}\norm{c}_\infty\right)t^{\theta},
\end{gather*}
as needed. In the case where $\Delta<t$, write
\begin{gather*}
\norm{Y}_{u,s,\infty}\leq \norm{Y}_{0,u,\infty}+2\left(\norm{a}_\infty+J_{Y,\theta}(t)+K_{\theta,\mu}\norm{\gamma}_{0,t,\mu}\norm{c}_\infty\right)\Delta^\theta.
\end{gather*}
Hence, dividing the interval $[0,t]$ into $[t/\Delta]+1$ subintervals of length at most $\Delta$, we obtain by induction
\begin{align*}
\norm{Y}_{0,t,\infty}&\leq \abs{Y_0}+2\left(\norm{a}_\infty+J_{Y,\theta}(t)+K_{\theta,\mu}\norm{\gamma}_{0,t,\mu}\norm{c}_\infty\right)\Delta^\theta\left(1+[t/\Delta]\right) \\
&\leq
\abs{Y_0}+2\left(\norm{a}_\infty+J_{Y,\theta}(t)+K_{\theta,\mu}\norm{\gamma}_{0,t,\mu}\norm{c}_\infty\right)\left(t^\theta+t\Delta^{\theta-1}\right),
\end{align*}
which implies the required statement.
\end{proof}

For a fixed $s\in[0,T]$ and a fixed almost surely bounded $\mathcal F_s$-measurable random vector $R_s = (R^{1}_s,\ldots,R^{d}_s)\in\R^d$, let  $\set{R_t,t\in[s,T]}=\set{(R^{1}_t,\ldots,R^{d}_t),t\in[s,T]}$ be the solution of
\begin{equation}\label{eqR}
\begin{aligned}
R^i_t = R_s^i +\int_s^t\la \grad a_i(X^n_u),R_u\ra \d u +\sum_{j=1}^m \int_s^t \la \grad b_{i,j}(X^n_u), R_u\ra \d W^j_u +
\sum_{k=1}^l \int_0^t \la \grad c_{i,k}(X^n_u),R_u\ra \d \gamma^k_u,
\end{aligned}
\end{equation}
$i=1,\dots,d$, $t\in[s,T]$. We will write equation \eqref{eqR}  shortly as
$$
R_t = R_s + \int_s^t a'(Y_u) R_u\d u  + \int_s^t [b'(Y_u),R_u]\d W_u+\int_s^t  [c'(Y_u),R_u]\d\gamma_u.
$$
Using the same methods as in \cite{SDEMiSH,LRDShMish}, it can be shown that this equation has a unique solution such that $\norm{R}_{s,T,\theta}<\infty$ a.s.
\begin{lemma}
\label{LemmaR}
For any $p>0$
 $$\E{\norm{R}_{s,T,\infty}^p}\leq
 K_p \left(\E{\exp\set{{K_p \norm{\gamma}^{2/(\mu+\theta)}_{0,T,\mu}}}}\right)^{1/4},$$
where  the constant $K_p$ depends only on $p$, $T$, $\theta$, $\mu$, $\operatorname{ess\,sup} \abs{R_s}$, $\norm{a}_\infty$, $\norm{a'}_\infty$,
$\norm{b}_\infty$, $\norm{b'}_\infty$, $\norm{c}_\infty$, $\norm{c'}_\infty$, $\norm{c''}_\infty$.
\end{lemma}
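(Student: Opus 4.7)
The plan is to derive a pathwise bound of the form $\norm{R}_{s,T,\infty}\le \exp(C(1+\norm{\gamma}_{0,T,\mu}^{2/(\mu+\theta)}+J_{X^n,\theta}(T)^{2/(\mu+\theta)}))\cdot\Psi$, where $\Psi$ aggregates polynomially controlled contributions with finite moments of every order, and then to extract the claimed estimate via Hölder's inequality together with Fernique's theorem and Lemma~\ref{intlemma}. The overall structure mirrors the Gronwall-type induction in the proof of Lemma~\ref{lemmaX}, applied to the linear equation \eqref{eqR}.

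First I would fix a subinterval $[u,v]\subset[s,T]$ of length $\delta$ and apply the Young estimate \eqref{Estim} to $\int_u^v [c'(X^n),R]\d\gamma$. The identity $\norm{c'(X^n)}_{u,v,\theta}\le\norm{c''}_\infty\norm{X^n}_{u,v,\theta}$ splits the Young bound into three $R$-dependent prefactors, $K_{\theta,\mu}\norm{\gamma}_{0,T,\mu}\norm{c'}_\infty\delta^{\mu}$, $K_{\theta,\mu}\norm{\gamma}_{0,T,\mu}\norm{c'}_\infty\delta^{\mu+\theta}$ and $K_{\theta,\mu}\norm{\gamma}_{0,T,\mu}\norm{c''}_\infty\norm{X^n}_{u,v,\theta}\delta^{\mu+\theta}$, accompanied by the drift term $\norm{a'}_\infty\delta$ and the Itô integral $\int_u^\cdot[b'(X^n),R]\d W$. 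Choosing $\Delta$ small enough that on an interval of length $\Delta$ each of the three $R$-prefactors is at most $1/4$ allows absorption of both $\norm{R}_{u,v,\theta}$ and $\norm{R}_{u,v,\infty}$ into the left-hand side. By Lemma~\ref{lemmaX} applied to $Y=X^n$, $\norm{X^n}_{u,u+\Delta,\theta}\le C(1+J_{X^n,\theta}(T)+\norm{\gamma}_{0,T,\mu})$ on every such interval, so the binding constraint is the third prefactor: one may take $\Delta^{-1}\sim (1+\norm{\gamma}_{0,T,\mu}(1+J_{X^n,\theta}(T)+\norm{\gamma}_{0,T,\mu}))^{1/(\mu+\theta)}$. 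This produces the per-step inequality $\norm{R}_{u,u+\Delta,\infty}\le 2\abs{R_u}+2\Xi_u$, where $\Xi_u$ collects the drift contribution and the $\theta$-Hölder norm of $\int_u^\cdot[b'(X^n),R]\d W$ on $[u,u+\Delta]$.

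Iterating over $N=\lceil T/\Delta\rceil$ consecutive subintervals, as in the proof of Lemma~\ref{lemmaX}, yields $\norm{R}_{s,T,\infty}\le 2^N\bigl(\abs{R_s}+\sum_{k<N}\Xi_{s+k\Delta}\bigr)$. Since $2^N\le\exp(CN)$, and $N$ is bounded, via $ab\le (a^2+b^2)/2$, by a constant multiple of $1+\norm{\gamma}_{0,T,\mu}^{2/(\mu+\theta)}+J_{X^n,\theta}(T)^{2/(\mu+\theta)}$, the announced pathwise bound follows. Raising it to the $p$-th power and applying Hölder's inequality with exponent $4$ separates the factor $\E{\exp(4pC\norm{\gamma}_{0,T,\mu}^{2/(\mu+\theta)})}^{1/4}$, which coincides with the right-hand side of the claim after relabelling the constant as $K_p$, from three companion factors: $\E{\exp(4pCJ_{X^n,\theta}(T)^{2/(\mu+\theta)})}^{1/4}$, finite by Lemma~\ref{intlemma} since $2/(\mu+\theta)<2$; $\E{\abs{R_s}^{4p}}^{1/4}$, finite by the essential boundedness of $R_s$; and $\E{\bigl(\sum_k\Xi_{s+k\Delta}\bigr)^{4p}}^{1/4}$, controlled by one more application of Lemma~\ref{intlemma} to $\int_s^\cdot[b'(X^n),R]\d W$.

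The main obstacle is circularity: invoking Lemma~\ref{intlemma} for the Itô integral above requires the integrand $[b'(X^n),R]$ to be essentially bounded, which is exactly what the argument is trying to establish for $R$. I would break this loop by running the whole iteration on the stopped process $R^{\tau_M}$ with $\tau_M=\inf\set{t\ge s:\abs{R_t}>M}$; every absorption threshold, every constant, and the per-step growth factor $2$ are independent of $M$, so the estimate passes to the limit $M\to\infty$ by Fatou's lemma, giving the stated bound for $R$ itself.
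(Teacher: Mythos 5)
Your overall architecture is the same as the paper's: localize to intervals of (random) length $\Delta$ chosen so that the Young-estimate prefactors involving $\norm{R}_{u,v,\theta}$ and $\norm{R}_{u,v,\infty}$ can be absorbed, iterate over $\lceil T/\Delta\rceil$ steps to produce a factor $e^{CT/\Delta}\sim\exp\set{C(\norm{\gamma}_{0,T,\mu}+J_{\cdot,\theta}(T))^{2/(\mu+\theta)}}$, and then separate that exponential from the remaining factors by H\"older, disposing of the $J$-factor via Lemma~\ref{intlemma} and Fernique. Up to that point the proposal is sound. The genuine gap is in how you treat the martingale contribution $\Xi$. You want to bound $\E{\bigl(\sum_k\Xi_{s+k\Delta}\bigr)^{4p}}$ by ``one more application of Lemma~\ref{intlemma} to $\int_s^\cdot[b'(X^n),R]\d W$,'' and you correctly note that this requires the integrand to be essentially bounded. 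But your proposed repair --- stopping at $\tau_M=\inf\set{t\ge s:\abs{R_t}>M}$ and passing to the limit by Fatou --- does not work: after stopping, the integrand is bounded only by $\norm{b'}_\infty M$, and the constant $K_{A,\kappa,\alpha,z,t}$ in Lemma~\ref{intlemma} depends on the bound $A$ of the integrand (the Gaussian tail there is $4\exp\set{-x^2/(2A^2t^{1-2\kappa})}$, so every moment of the H\"older norm scales like a power of $A$). The resulting estimate therefore blows up as $M\to\infty$, and Fatou gives nothing. Your claim that ``every constant \dots is independent of $M$'' is false precisely for this term, which is the one that matters.

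The paper closes this loop differently, and this is the step you are missing. Instead of treating the stochastic integral as an exogenous input with exponential moments, it works on the event $A_t=\set{\norm{\gamma}_{0,t,\mu}+J_{Y,\theta}(t)\le N,\ \norm{R}_{s,t,\infty}\le M}$ and keeps the dependence on $R$ explicit: by the Garsia--Rodemich--Rumsey inequality and the Burkholder--Davis--Gundy inequality,
$$
\E{\norm{Z}^{2p}_{u,t,\theta}\ind{A_t}}\le C\int_u^t\E{\norm{R}^{2p}_{s,r,\infty}\ind{A_r}}\d r,
$$
with $C$ independent of $M$ (only the finiteness guaranteed by the truncation is used). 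Substituting this into the per-step inequality and applying Gronwall's lemma yields $\E{\norm{R}^{2p}_{s,t,\infty}\ind{A_t}}\le C\,\E{\norm{R}^{2p}_{s,u,\infty}\ind{A_u}}$ with $C$ uniform in $M$; only then does one let $M\to\infty$, and finally sum over the layers $\set{\zeta\in[n-1,n]}$ of $\zeta=\norm{\gamma}_{0,T,\mu}+J_{Y,\theta}(T)$ before applying Cauchy--Schwarz. To make your argument correct you would need to replace your third H\"older factor by this moment-plus-Gronwall mechanism; as written, the proof does not go through.
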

\begin{proof}
In this proof the symbol $C$ will denote a generic constant which depends only on the parameters mentioned in the statement.

\newcommand{\I}{\mathbb{I}}
Fix some $N\ge 1$, $M\ge 1$ and define for $t\in [s,T]$ \ $A_t = \set{\norm{\gamma}_{0,t,\mu}+ J_{Y,\theta}(t)\le N, \norm{R}_{s,t,\infty}\le M}$, $\I_t = \ind{A_t}$, $Z_t = \int_s^t [b'(Y_s),R_s]\d W_s$. Let also $\Delta = \min\set{\Delta_1,\Delta_2,\Delta_3}$, where
\begin{align*}
\Delta_1 &=   \left(9K_{\theta,\mu}\norm{c'}_\infty N+1\right)^{-1/\mu},\quad \Delta_2 = \left(9\norm{a'}_\infty+1\right)^{-1},\\
\Delta_3& =\left(9K_{\theta,\mu}\norm{c''}_\infty\big(\norm{a}_\infty
+K_{\theta,\mu}
\norm{c}_\infty +1\big)N^2+1\right)^{-1/(\mu+\theta)}.
\end{align*}

Take some $u,t\in[s,T]$ such that $u<t$ and estimate
\begin{align*}
\abs{\int_u^t a'(Y_r)R_r\d r} &\leq \norm{a'}_\infty \norm{R}_{u,t,\infty} (t-u),\\
\abs{\int_u^t [c'(Y_r),R_r]\d \gamma_r}&\leq K_{\theta,\mu}  \norm{\gamma}_{0,t,\mu}\left( \norm{[c'(Y),R]}_{u,t,\infty}(t-u)^{\mu}+
\norm{[c'(Y),R]}_{u,t,\theta}(t-u)^{\theta+\mu}\right) \\
&\leq
K_{\theta,\mu} \norm{\gamma}_{0,t,\mu}\Bigl( \norm{c'}_{\infty}\norm{R}_{u,t,\infty}(t-u)^{\mu}
\\ &\qquad+\big(\norm{R}_{u,t,\infty}\norm{c''}_\infty\norm{Y}_{u,t,\theta}+
\norm{c'}_\infty\norm{R}_{u,t,\theta}\big) (t-u)^{\theta+\mu}   \Bigr).
\end{align*}
Therefore,
\begin{align*}
&\abs{R_t - R_u}  \le K_{\theta,\mu}\norm{\gamma}_{0,t,\mu} \norm{c'}_\infty (t-u)^{\theta+\mu}\norm{R}_{u,t,\theta} + \abs{Z_t-Z_u}
\\
& + \left(\norm{a'}_\infty  (t-u) +  K_{\theta,\mu}\norm{\gamma}_{0,t,\mu}\big(\norm{c'}_{\infty}(t-u)^{\mu} +\norm{c''}_\infty\norm{Y}_{u,t,\theta}(t-u)^{\theta+ \mu}\big)
\right)\norm{R}_{u,t,\infty}.
\end{align*}
Now let $(t-u)\le \Delta$ and $\omega\in A_t$. Then
\begin{align*}
&\norm{R}_{u,t,\theta} \le K_{\theta,\mu} \norm{c'}_\infty N\Delta^{\mu}\norm{R}_{u,t,\theta} + \norm{Z}_{u,t,\theta}
\\
&\quad + \left(\norm{a'}_\infty  \Delta^{1-\theta} +  K_{\theta,\mu}N\big(\norm{c'}_{\infty}\Delta^{\mu-\theta} +\norm{c''}_\infty\norm{Y}_{u,t,\theta}\Delta^{\mu}\big)
\right)\norm{R}_{u,t,\infty}\\
&\le \frac19 \norm{R}_{u,t,\theta} + \norm{Z}_{u,t,\theta} + \left(\frac{1}{9} \Delta^{-\theta} +  \frac19\Delta^{-\theta} + K_{\theta,\mu} \norm{c''}_\infty \norm{Y}_{u,t,\theta} N \Delta^{\mu}\big)
\right)\norm{R}_{u,t,\infty}. 
\end{align*}
Applying the estimate \eqref{Ysttheta} and the definition of $\Delta_3$, we get $K_{\theta,\mu} \norm{c''}_\infty \norm{Y}_{u,t,\theta} N\Delta^{\mu+\theta}\le 2/9$, which leads to
$$
\norm{R}_{u,t,\theta}\le \frac1{2 \Delta^{\theta}} \norm{R}_{u,t,\infty} + \frac{9}{8}\norm{Z}_{u,t,\theta}.
$$
Obviously, $\norm{R}_{u,t,\infty} \le \abs{R_u} + \norm{R}_{u,t,\theta}\Delta^\theta \le \norm{R}_{s,u,\infty} +  \norm{R}_{u,t,\infty}/2 + 9\Delta^\theta \norm{Z}_{u,t,\theta}/8$, whence
$$
\norm{R}_{s,t,\infty} \le 2 \norm{R}_{s,u,\infty} + \frac{9\Delta^\theta}{4}\norm{Z}_{u,t,\theta}
$$
for $\omega\in A_t$.
Therefore,
\begin{equation}\label{normr2p}
\E{\norm{R}^{2p}_{s,t,\infty}\I_t}\le C \left(\E{\norm{R}^{2p}_{s,u,\infty}\I_t} +  \Delta^{2p\theta}\E{\norm{Z}^{2p}_{u,t,\theta}\I_t} \right) \le
C \left(\E{\norm{R}^{2p}_{s,u,\infty}\I_s} +  \E{\norm{Z}^{2p}_{u,t,\theta}\I_t}\right).
\end{equation}
We can assume without loss of generality that $p>2(1-2\theta)^{-1}$. Then, by the Garsia--Rodemich--Rumsey inequality,
\begin{gather*}
\E{\norm{Z}^{2p}_{u,t,\theta}\I_t}\le C \int_u^t \int_u^t \frac{\E{\abs{Z_x-Z_y}^{2p}\I_t}}	{\abs{x-y}^{2\theta p + 2}}\d x\d y
\le C \int_u^t \int_u^t \frac{\E{\abs{\int_x^y [b'(Y_r),R_r]\I_r\d W_r}^{2p}}}{\abs{x-y}^{2\theta p + 2}}\d x\d y\\
\le C \int_u^t \int_u^t \frac{\E{\abs{\int_x^y \norm{b'}_{\infty}^2 \abs{R_r}^2\I_r\d r}^{p}}}{\abs{x-y}^{2\theta p + 2}}\d x\d y\le
C \int_u^t \int_u^t \int_x^y \E{\abs{R_r}^{2p}\I_r}\d r\abs{x-y}^{p(1-2\theta)  - 3}\d x\d y\\
\le C \int_{u}^{t}\E{\abs{R_r}^{2p}\I_r}\d r \int_0^t \int_0^t\abs{x-y}^{p(1-2\theta)  - 3}\d x\d y \le C\int_{u}^{t}\E{\norm{R}^{2p}_{s,r,\infty}\I_r}\d r.
\end{gather*}
Plugging this estimate into  \eqref{normr2p} we get with the help of the Gronwall lemma that
\begin{gather*}
\E{\norm{R}^{2p}_{s,t,\infty}\I_t} \le C \E{\norm{R}^{2p}_{s,u,\infty}\I_u} e^{C\Delta}\le C \E{\norm{R}^{2p}_{s,u,\infty}\I_u}
\end{gather*}
whenever $(t-u)<\Delta$. By induction, we get
\begin{equation}\label{normr2p1}
\E{\norm{R}^{2p}_{s,T,\infty}\I_T} \le \E{\abs{R_s}^{2p}\I_s}C^{[(T-s)/\Delta]+1} \le C e^{CT/\Delta}
\le K e^{K N^\alpha},
\end{equation}
where we use the symbol $K$ for the constant, as it will be fixed from now on. Denote $\zeta = \norm{\gamma}_{0,T,\mu}+ J_{Y,\theta}(T)$ and $\alpha = 2/(\mu+\theta)$. Observe that in \eqref{normr2p1} the right-hand side is independent of $M$, so letting $M\to\infty$, we get $\E{\norm{R}^{2p}_{s,T,\infty}\ind{\zeta\le N}}  \le K \exp\set{K N^\alpha}$ for any $N\ge 1$. Now write
\begin{gather*}
\E{\norm{R}^{2p}_{s,T,\infty}e^{-2 K \zeta^\alpha}}
= \sum_{n=1}^{\infty} \E{\norm{R}^{2p}_{s,T,\infty}e^{-2 K \zeta^\alpha} \ind{\zeta\in[n-1,n]}}\\
\le \sum_{n=1}^{\infty} \E{\norm{R}^{2p}_{s,T,\infty}e^{-2 K (n-1)^\alpha} \ind{\zeta\le n}}\le K \sum_{n=1}^{\infty} e^{Kn^\alpha-2 K (n-1)^\alpha}=:K'<\infty.
\end{gather*}
By the Cauchy--Schwartz inequality,
\begin{gather*}
\E{\norm{R}^{p}_{s,T,\infty}}\le \left(\E{\norm{R}^{2p}_{s,T,\infty}e^{-2 K \zeta^\alpha}} \E{e^{2 K \zeta^\alpha}}\right)^{1/2}
\le \left(K' \E{e^{2 K \zeta^\alpha}}\right)^{1/2}.
\end{gather*}
Finally,
\begin{gather*}
\E{e^{2 K \zeta^\alpha}}\le \E{\exp\set{2^\alpha K \big(\norm{\gamma}_{0,T,\mu}^\alpha + J_{Y,\theta}(T)^\alpha\big)}}\\
\le \left(\E{\exp\set{2^{\alpha+1} K\norm{\gamma}_{0,T,\mu}^\alpha}} \E{\exp\set{2^{\alpha+1} K J_{Y,\theta}(T)^\alpha\big)}}\right)^{1/2}.
\end{gather*}
Applying Lemma~\ref{intlemma} to the last term, we get the required statement.
\end{proof}
\textbf{Acknowledgments.} The authors would like to thank the anonymous referees of the manuscript and the area editor for their valuable comments and suggestions that helped to improve the quality of the paper.
\bibliographystyle{elsarticle-harv}
\bibliography{malliavinregularity}
\end{document}